\newcommand{\defeq}{\overset{\hbox{\tiny{def}}}{=}}
\renewcommand{\P}{\mathbf{P}}
\newcommand{\N}{\mathbb{N}}
\newcommand{\E}{\mathbb{E}}
\newcommand{\Z}{\mathbb{Z}}
\newcommand{\lin}{\left[\kern-0.15em\left[}
\newcommand{\rin} {\right]\kern-0.15em\right]}
\newcommand{\linf}{[\kern-0.15em [}
\newcommand{\rinf} {]\kern-0.15em ]}
\newcommand{\ilin}{\left]\kern-0.15em\left]}
\newcommand{\irin} {\right[\kern-0.15em\right[}
\newtheorem{lem}{Lemma}[section]
\newtheorem{prop}[lem]{Proposition}
\newtheorem{theo}[lem]{Theorem}
\begin{document}

\title{\textbf{Recurrence for vertex-reinforced random walks on $\Z$ with weak reinforcements.}}

\author{
\textsc{Arvind Singh}\footnote{Département de Mathématiques,
Université Paris XI, France. E-mail: arvind.singh@math.u-psud.fr}}
\date{}
\maketitle

\vspace*{0.2cm}

\begin{abstract}
We prove that any vertex-reinforced random walk on the integer lattice with non-decreasing reinforcement sequence $w$ satisfying $w(k) = o(k^{\alpha})$ for some $\alpha <1/2$ is recurrent. This improves on previous results of Volkov \cite{V2} and Schapira \cite{Sch}.
\end{abstract}

\bigskip
{\small{
 \noindent{\bf Keywords. } Self-interacting random walk; reinforcement; recurrence and transience

\bigskip
\noindent{\bf A.M.S. Classification. } 60K35

\section{Introduction}

In this paper, we consider a one-dimensional vertex-reinforced random walk (VRRW) with non-decreasing weight sequence $w:\N\to (0,\infty)$, that is a stochastic process  $X = (X_n)_{n\geq0}$ on $\Z$, starting from $X_0 = 0$, with transition probabilities:
$$
\P\{X_{n+1} = X_n \pm 1 \,|\, \mathcal{F}_n\} = \frac{w(Z_n(X_n \pm 1))}{w(Z_n(X_n+1)) + w(Z_n(X_n - 1))}
$$
where $\mathcal{F}_n \defeq \sigma(X_1,\ldots,X_n)$ is the natural filtration of the process and $Z_n(x)\defeq \#\{0\leq k\leq n,X_k = x\}$ is the local time of $X$ on site $x$ at time $n$. This process was first introduced by Pemantle in \cite{P}
and then studied in the linear case $w(k) = k+1$ by Pemantle and Volkov in \cite{PV}. They proved the surprising fact that the walk visits only finitely many sites. This result was subsequently improved by Tarrès \cite{T1,T2} who showed that the walk eventually gets stuck on exactly $5$ consecutive sites almost surely. When the reinforcement sequence grows faster than linearly, the walk still gets stuck on a finite set but whose cardinality may be smaller than $5$, see \cite{BSS,V2} for details. On the other hand, Volkov \cite{V2} proved that for sub-linearly growing weight sequences of order $n^\alpha$ with $\alpha<1$, the walk necessarily visits infinitely many sites almost-surely. Later, Schapira \cite{Sch} improved this result showing that, when $\alpha < 1/2$, the VRRW is either transient or recurrent. The main result of this paper is to show that the walk is, indeed, recurrent.

\begin{theo}\label{maintheo} Assume that the weight sequence satisfies $w(k) = o(k^\alpha)$ for some $\alpha< 1/2$. Then $X$ is recurrent \emph{i.e.} it visits every site infinitely often almost-surely.
\end{theo}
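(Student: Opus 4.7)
The plan is to reduce the problem to ruling out transience and then derive a contradiction from the assumption of transience by a martingale / urn-type analysis of the edge crossings. By the recurrence-or-transience dichotomy of Schapira~\cite{Sch}, which holds exactly under the hypothesis $w(k)=o(k^\alpha)$ with $\alpha<1/2$, it suffices to show $\P\{X_n\to+\infty\}=0$, the case $-\infty$ being symmetric. I therefore assume for contradiction that this event has positive probability and work on it.

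On $\{X_n\to+\infty\}$ the local times $N(x)\defeq\lim_n Z_n(x)$ are finite for every $x\in\Z$. Writing $R_x$, resp.\ $L_x$, for the total number of crossings of the edge $\{x,x+1\}$ from left to right, resp.\ right to left, a straightforward flux balance (decomposing each visit to $x$ into an entrance and an exit) yields
$$N(x)=R_{x-1}+L_x=R_x+L_{x-1}$$
up to a boundary correction at $x=0$, and transience to $+\infty$ is equivalent to $R_x-L_x=1$ for every $x\ge0$. The goal is to show that this rigid imbalance cannot persist along the entire half-line when $w$ is weakly reinforcing.

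The core of the argument is to use a Rubin-type representation of the VRRW in continuous time via independent families of exponential clocks attached to each (vertex, direction) pair. Given $R_x$, the random variable $L_x$ can then be realised as the number of "failures" before $R_x+1$ "successes" in a generalised P\'olya urn with weight sequence $w$, so $(N(x))_{x\ge0}$ becomes a Markov chain with an explicit (if awkward) transition kernel. I would next build a Lyapunov or martingale functional $\varphi$ on $\N$ such that $(\varphi(N(x)))_{x\ge 0}$ has small drift and fluctuations of size controlled by the ratio $w(k+1)/w(k)$, and argue that under $w(k)=o(k^\alpha)$ with $\alpha<1/2$ these fluctuations are too large to be compensated by the drift along the whole sequence. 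A conditional Borel--Cantelli argument with a divergent series of the form $\sum_x w(N(x))^{-2}=\infty$ should then force $L_x=\infty$ for some $x$, contradicting transience.

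The main obstacle, as I see it, is the quantitative control of the urn mechanism in the last step: one needs concentration estimates for $L_x$ given $R_x$ that are uniform in $x$ under the very weak hypothesis $w(k)=o(k^\alpha)$, in particular without any regularity assumption on $w$ beyond monotonicity. The threshold $\alpha<1/2$ is precisely the one at which the Gaussian fluctuations of a P\'olya urn with weight $w(k)\sim k^\alpha$ become of the same order as the mean, so the argument is tight and most of the technical work should consist in showing that the accumulated error over sites $x\to\infty$ does not swamp the martingale drift.
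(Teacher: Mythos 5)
Your proposal is a research plan rather than a proof: the two steps that would carry all the weight are both left undone, and each has a real problem. First, the reduction. You invoke Schapira's $0$--$1$ law as holding ``exactly under the hypothesis $w(k)=o(k^\alpha)$'', but that result is proved for weights \emph{of order} $k^\alpha$ (with the attendant regularity), not for an arbitrary non-decreasing $w$ satisfying only $w(k)=o(k^\alpha)$; the theorem you are asked to prove makes no regular-variation assumption, and the paper explicitly avoids relying on \cite{Sch} for precisely this reason. So the dichotomy you start from is not available at the stated level of generality without additional work that you do not supply.

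Second, and more seriously, the contradiction you want to extract from transience is only described, not derived. The sentences beginning ``I would next build a Lyapunov or martingale functional $\varphi$'' and ``A conditional Borel--Cantelli argument \dots should then force $L_x=\infty$'' are the entire content of the argument, and you yourself identify the uniform concentration of $L_x$ given $R_x$ as the main obstacle. Worse, the structural claim underpinning this step is doubtful: a Rubin-type clock construction decouples the crossing numbers of distinct edges for \emph{edge}-reinforced walks, but for a \emph{vertex}-reinforced walk the weight $w(Z_n(x))$ enters the jump probabilities from both $x-1$ and $x+1$, so the ``urn'' at $x+1$ governing $L_x$ has one colour weighted by the local time at $x$ and the other by the local time at $x+2$; the sequence $(N(x))_{x\ge 0}$ is not a Markov chain with an explicit kernel, and $L_x$ given $R_x$ is not the failure count of a classical generalised P\'olya urn. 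The paper takes an entirely different, self-contained route: it builds an explicit $\mathcal{F}_n$-martingale $M_n$ whose increments are weighted edge crossings with tuned damping factors $a_x$, shows that $M_{n\wedge\tau_{-y}}$ is bounded below (hence converges), bounds each increment from below by $A_y/(w(Z_n(z))w(Z_n(z+1)))$, and then proves a purely deterministic analytic lemma showing that on $\{\inf_n X_n=-y+1\}$ the resulting lower bound for $M_{\tau_v}$ diverges as $v\to\infty$, contradicting convergence; this forces $\inf_n X_n=-\infty$ a.s., and recurrence follows by symmetry. Your divergent series $\sum_x w(N(x))^{-2}=\infty$ is the right heuristic and plays the role of the paper's analytic lemma, but the paper obtains its contradiction without conditioning on transience and without any urn concentration estimate, which is exactly the part of your plan that is missing.
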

Let us mention that, simultaneously with the writing of this paper, a similar result was independently obtained by Chen and Kozma \cite{CK} who proved recurrence for the VRRW with weights of order $n^\alpha$, $\alpha<1/2$, using a clever martingale argument combined with previous local time estimates from Schapira \cite{Sch}. The argument in this paper, while also making use of a martingale, is self-contained and does not rely upon previous results of Volkov \cite{V2} or Schapira \cite{Sch}. In particular, we do not require any assumption on the regular variation of the weight function $w$.

\section{A martingale}
Obviously, multiplying the weight function by a positive constant does not change the process $X$. Thus, we now assume without loss of generality that $w(0) = 1$. We define the two-sided sequence $(a_x)_{x\in\Z}$ by
$$
a_x \defeq \left\{
\begin{array}{ll}
1 - \frac{1}{(x+2)^{1+\varepsilon}} & \hbox{for $x\geq 0$}\\
\frac{1}{2} & \hbox{for $x < 0$}
\end{array}
\right.
$$
where $\varepsilon>0$ will be chosen later during the proof of the theorem. Define also
$$
A_k \defeq \prod_{x = -k}^{\infty} a_x \in (0,1).
$$
We construct from $X$ two processes $(M_n)_{n\geq 0}$ and $(\Delta_n(z),\, z < X_n)_{n\geq 0}$ in the following way:
\begin{enumerate}
\item Initially set $M_0 \defeq 0$ and $\Delta_0(z) \defeq 1$ for all $z< 0 = X_0$.
\item By induction, $M_n$ and $(\Delta_n(z),\, z < X_n)$ having been constructed,
\begin{itemize}
\item if $X_n = x$ and $X_{n+1} = x - 1$, then
$$
  \begin{array}{lcll}
    M_{n+1} &\defeq& M_n - a_x\Delta_n(x-1) &\hbox{}\\
    \Delta_{n+1}(z) &\defeq& \Delta_{n}(z) \quad \hbox{for $z < x-1$,} \\
  \end{array}
$$
\item if $X_n = x$ and $X_{n+1} = x + 1$, then
$$
  \begin{array}{lcll}
    \hspace{2.7cm}M_{n+1} &\defeq& M_n + a_x\Delta_n(x-1)\frac{w(Z_n(x-1))}{w(Z_n(x+1))} &\hbox{} \\
    \hspace{2.7cm}\Delta_{n+1}(z) &\defeq&
\left\{
\begin{array}{ll}
 \Delta_{n}(z) & \hbox{for $z < x$,} \\
 a_x\Delta_n(x-1)\frac{w(Z_n(x-1))}{w(Z_n(x+1))} & \hbox{for $z = x$.}
\end{array}
\right.
  \end{array}
$$
\end{itemize}
\end{enumerate}
Note that the quantities $\Delta$ have a simple interpretation: for any $n$ and $z<X_n$, the value $\Delta_n(z)$ is positive and corresponds to the increments of $M_n$ the last time before $n$ that the walk $X$ jumped from site $z$ to site $z+1$ (with the convention $\Delta_n(z) = 1$ for negative $z$ if no such jumps occurred yet). By extension, we also define $\Delta_n \defeq \Delta_n(X_n)$ at the current position as the "would be" increment of  $M_n$ if $X$ makes its next jumps to the right (at time $n+1$) \emph{i.e.}
$$
\Delta_n \defeq a_{X_n}\Delta_n(X_n - 1)\frac{w(Z_n(X_n-1))}{w(Z_n(X_n+1))}.
$$
We will also use the notation $\tau_y$ to denote the hitting time of site $y$,
$$
\tau_{y} \defeq \inf\{n\geq 0,\, X_n = y\} \in [0,\infty].
$$

\medskip

\begin{prop} The process $M$ is an $\mathcal{F}_n$-martingale and, for $n\geq 0$, we have
\begin{equation}\label{eq1}
M_n = \sum_{i=0}^{n-1} \mathbf{1}_{\{X_{i+1} = X_i + 1\}}\left(1 - a_{X_{i +1}}\mathbf{1}_{\{\exists j \in (i,n],\, X_j = X_i\}}\right)\Delta_i + \frac{1}{2}\inf_{i\leq n}X_i
\end{equation}
In particular, for $y=1,2,\dots$, the process $M_{n\wedge \tau_{-y}}$ is bounded below by $-y/2$, hence it converges a.s.
%$$
%M_{n\wedge \tau_{-y}} \geq -\frac{y}{2}.
%$$
\end{prop}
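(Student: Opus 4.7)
The plan is to establish three things in turn: the martingale property of $M$, the explicit decomposition \eqref{eq1}, and then a uniform lower bound that together with Doob's martingale convergence theorem yields the a.s.\ convergence.

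For the martingale property I would compute $\E[M_{n+1}-M_n\mid\mathcal{F}_n]$ directly from the two-case definition. Writing $x=X_n$, $u=w(Z_n(x+1))$ and $v=w(Z_n(x-1))$, the increment equals $+a_x\Delta_n(x-1)\,v/u$ with conditional probability $u/(u+v)$ and $-a_x\Delta_n(x-1)$ with conditional probability $v/(u+v)$, so the two contributions each have absolute value $a_x\Delta_n(x-1)\,v/(u+v)$ and cancel.

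For \eqref{eq1} I would induct on $n$, with the trivial base case $n=0$. For a rightward step $X_{n+1}=X_n+1$, a single new summand appears at $i=n$; its indicator is $0$ (only $j=n+1$ is available, and $X_{n+1}\ne X_n$), so it contributes exactly $\Delta_n$, matching the LHS increment. No previously existing indicator switches from $0$ to $1$: any earlier rightward jump $i$ with $X_i=X_n+1$ satisfies $X_{i+1}=X_n+2$, and since the walk is now arriving at $X_n+1$ from $X_n$ it must already have crossed $X_n+1$ inside $(i,n]$ on its way back down, so the corresponding indicator was already on at time $n$. For a leftward step $X_{n+1}=X_n-1$, the only indicator that can flip is the one attached to the most recent rightward crossing $i^*$ of the edge $(X_n-1,X_n)$, provided such an $i^*$ exists. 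No rightward jump from $X_n-1$ occurs on $(i^*,n]$, so $\Delta_\cdot(X_n-1)$ is frozen on this interval and $\Delta_{i^*}=\Delta_n(X_n-1)$; the sum then loses $a_{X_n}\Delta_n(X_n-1)$, $\inf_{i\le n}X_i$ is unchanged (the site $X_n-1$ had already been visited), and this matches the LHS decrement. If $i^*$ does not exist, the walk is descending to a strict new minimum, so $\Delta_n(X_n-1)=1$ and the sum is unchanged, while $\tfrac12\inf_{i\le n+1}X_i$ drops by $\tfrac12$, matching the LHS decrement $-a_{X_n}\cdot 1$ (the boundary factor $\tfrac12$ in \eqref{eq1} is precisely calibrated so that $a_{X_n}=\tfrac12$ in this configuration).

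Finally, $a_x\in(0,1)$ and $\Delta_i>0$ (both follow by immediate induction on the update rules), so every summand in \eqref{eq1} is non-negative and $M_n\ge\tfrac12\inf_{i\le n}X_i$. On $\{n\le\tau_{-y}\}$ the walk stays $\ge -y$, hence $M_{n\wedge\tau_{-y}}\ge -y/2$; being a stopped martingale bounded below, it converges almost surely by Doob's martingale convergence theorem. The main obstacle will be the leftward-step case of the induction, where one must identify precisely which past indicator can flip, prove the identity $\Delta_{i^*}=\Delta_n(X_n-1)$ linking the running $\Delta$ at site $X_n-1$ to the would-be increment at the last rightward crossing of that edge, and check the "first descent below the minimum so far" subcase.
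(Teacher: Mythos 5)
Your proposal follows essentially the same route as the paper: the martingale property by the same one-line conditional computation, and the decomposition \eqref{eq1} by pairing each leftward recrossing of an edge with the most recent rightward crossing of that edge, using the identity $\Delta_{i^*}=\Delta_n(X_n-1)$ (the paper states this as direct bookkeeping of edge crossings rather than a formal induction, but the content is identical, and your induction is the more careful write-up).

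One caveat, which you inherit from the statement rather than introduce: in the new-minimum subcase your assertion that $a_{X_n}=\tfrac12$ fails for the very first descent below the origin, where $X_n=0$ and $a_0=1-2^{-(1+\varepsilon)}>\tfrac12$. Consequently \eqref{eq1} as written is off by the constant $a_0-\tfrac12$ as soon as $\inf_{i\le n}X_i<0$ (test $n=1$ with $X_1=-1$: the right side gives $-\tfrac12$ while $M_1=-a_0$), and the lower bound should read $-y/2-(a_0-\tfrac12)$. The paper's own proof makes the same slip (``$M$ decreases by $\tfrac12$ each time it crosses a new edge of the negative half line'' is false for the edge $\{-1,0\}$). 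This is harmless for everything that follows, since only some finite lower bound is needed for the stopped martingale to converge, but your induction, carried out honestly at the step $X_n=0\to X_{n+1}=-1$, would have detected it; you should either correct the constant or redefine $a_x=\tfrac12$ for $x\le 0$.
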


\begin{proof} Since $\Delta_n(\cdot)$ and $Z_n(\cdot)$ are $\mathcal{F}_n$-measurable, by definition of $M$,
\begin{multline*}
\E[M_{n+1}\,|\,\mathcal{F}_n] \\
\begin{aligned}
& = \E\left[ M_n + a_{X_n}\Delta_n(X_n-1) \left(\frac{w(Z_n(X_n-1))}{w(Z_n(X_n+1))}\mathbf{1}_{\{X_{n+1} = X_n + 1\}} - \mathbf{1}_{\{X_{n+1} = X_n - 1\}}\right)  \Big|\,\mathcal{F}_n\right]\\
&=  M_n + a_{X_n}\Delta_n(X_n-1)\left( \frac{w(Z_n(X_n-1))}{w(Z_n(X_n+1))}\P\{X_{n+1} = X_n + 1 \,|\, \mathcal{F}_n \} - \P\{X_{n+1} = X_n - 1 \,|\, \mathcal{F}_n \} \right)\\
&=  M_n
\end{aligned} 
\end{multline*}
thus $M$ is indeed a martingale. Furthermore, by construction, at each time $i$ when the process $X$ crosses an edge $\{x,x+1\}$ from left to right, the process $M$ increases by $\Delta_i = \Delta_{i+1}(x) >0$. If at some later time, say $j > i$, $X$ crosses this edge again (and thus in the other direction), the martingale decreases by $a_{x+1} \Delta_{j}(x) = a_{x+1} \Delta_i$. Moreover, by convention $\Delta_0(z) = 1$ and $a_z = \frac{1}{2}$ for $z<0$ so that $M$ decreases by $\frac{1}{2}$ each time it crosses a new edge of the negative half line for the first time. Putting these facts together, we get exactly \eqref{eq1}. Finally, since $a_z < 1$ for any $z\in\Z$, each term in the sum \eqref{eq1} is positive, hence $M_{n\wedge\tau_{-y}}$ is bounded below by $\frac{1}{2}\inf_{i\leq n\wedge\tau_{-y}} X_i \geq -y/2$.
\end{proof}

\begin{prop}\label{minodelta} Let $y >0$. For $n\le \tau_{-y}$, we have
\begin{equation}\label{eq2}
\Delta_n(z)\ge \frac{A_y}{w(Z_n(z))w(Z_n(z+1))}\quad\hbox{for any $-y\le z \le X_n$.}
\end{equation}
\end{prop}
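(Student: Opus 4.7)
The plan is to establish, by induction on $n$, the slightly stronger bound
\[
\Phi_n(z) \defeq \Delta_n(z)\, w(Z_n(z))\, w(Z_n(z+1)) \;\geq\; \prod_{u=-y}^{z} a_u \qquad (-y \leq z \leq X_n),
\]
which implies \eqref{eq2} since $\prod_{u=-y}^{z} a_u \geq A_y$ (all $a_u \in (0,1)$). The key structural observation, obtained simply by substituting the definition of the ``would be'' increment $\Delta_n(X_n) = a_{X_n}\Delta_n(X_n-1)w(Z_n(X_n-1))/w(Z_n(X_n+1))$ into the definition of $\Phi_n(X_n)$, is the identity
\[
\Phi_n(X_n) \;=\; a_{X_n}\, \Phi_n(X_n - 1).
\]

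The base case $n=0$ is immediate: for $z < 0$ one has $\Delta_0(z) = 1$, $w(Z_0(\cdot)) \geq w(0) = 1$, and $\prod_{u=-y}^z a_u \leq 1$, while at $z = 0$ the identity gives $\Phi_0(0) = a_0 \Phi_0(-1) \geq a_0$. For the inductive step a useful sublemma is: \emph{whenever $\Delta_{n+1}(z) = \Delta_n(z)$ one has $\Phi_{n+1}(z) \geq \Phi_n(z)$}, since then only the $Z$-values have moved (upward) and $w$ is non-decreasing.

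In the right-jump case $X_{n+1} = X_n + 1$: for $z < X_n$ the stored $\Delta$ is unchanged, so the sublemma applies; at $z = X_n$ the new stored value $\Delta_{n+1}(X_n)$ equals the old would-be $\Delta_n(X_n)$, so again $\Phi_{n+1}(X_n) \geq \Phi_n(X_n)$; finally at $z = X_{n+1}$ the identity at time $n+1$, combined with the preceding step and the identity at time $n$, yields $\Phi_{n+1}(X_{n+1}) \geq a_{X_{n+1}} a_{X_n}\, \Phi_n(X_n - 1) \geq \prod_{u=-y}^{X_{n+1}} a_u$. In the left-jump case $X_{n+1} = X_n - 1$: for $z \leq X_n - 2$ the sublemma applies; at $z = X_{n+1}$ the identity at time $n+1$ gives $\Phi_{n+1}(X_{n+1}) = a_{X_{n+1}}\, \Phi_{n+1}(X_n - 2) \geq a_{X_{n+1}} \prod_{u=-y}^{X_n - 2} a_u$, as required.

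The main subtlety is correctly distinguishing the would-be $\Delta_n(X_n)$ from the stored $\Delta_n(z)$ for $z < X_n$: after a right jump the two coincide (the would-be becomes the new stored), whereas after a left jump the new would-be at $X_{n+1} = X_n - 1$ is computed afresh from $\Delta_n(X_n - 2)$ and may be strictly smaller than the old stored value $\Delta_n(X_n-1)$. The strengthened hypothesis $\Phi_n(z) \geq \prod_{u=-y}^z a_u$, rather than the weaker $\Phi_n(z) \geq A_y$, is exactly what is needed to absorb the factor $a_{X_{n+1}}$ each time the key identity is invoked at the walker's current position, and it is the reason one can close the induction without any assumption on the regularity of $w$.
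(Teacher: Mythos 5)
Your proof is correct and follows essentially the same route as the paper: an induction on $n$ establishing the strengthened bound with the partial product $\prod_{u=-y}^{z}a_u$ in place of $A_y$, with your quantity $\Phi_n(z)$ and the identity $\Phi_n(X_n)=a_{X_n}\Phi_n(X_n-1)$ being a clean repackaging of the paper's direct computation. If anything, your treatment of the left-jump case is more careful than the paper's terse claim that $\Delta_{n+1}(z)=\Delta_n(z)$ for all $z\le X_{n+1}$ (which, as you correctly note, fails at $z=X_{n+1}$ itself, where the would-be increment is recomputed from $\Delta_n(X_n-2)$ and current local times); your derivation of the bound at $z=X_{n+1}$ from the identity at time $n+1$ closes that gap.
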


\begin{proof} We prove by induction on $n$ that for $n\le \tau_{-y}$,
\begin{equation}\label{eq3}
\Delta_n(z)\ge \frac{\prod_{i= -y}^z a_i}{w(Z_n(z))w(Z_n(z+1))} \hbox{ for any $-y\le z \le X_n$}.
\end{equation}
Recalling that $w(k)\ge 1$ and $a_k\le 1$ for any $k$, it is straightforward that \eqref{eq3} holds for $n=0$. Now, assume the result for $n$ and consider the two cases:
\begin{itemize}
\item If $X_{n+1}=X_n-1$. Then for any $-y\le z \le X_{n+1}$, we have $\Delta_{n+1}(z)= \Delta_n(z)$ whereas $w(Z_{n+1}(z))\ge w(Z_{n}(z))$. Thus \eqref{eq3} holds for $n+1$.
\item If $X_{n+1}=X_n+1$. Again, we have $\Delta_{n+1}(z)= \Delta_n(z)$ for any $-y\le z \le X_{n}$. It remains to check that $\Delta_{n+1}(X_{n+1})$ satisfies  the inequality:
\begin{eqnarray*} \Delta_{n+1}(X_{n+1}) & = &
\Delta_{n+1} = a_{X_{n+1}}\Delta_{n+1}(X_n)\frac{w(Z_{n+1}(X_n))}{w(Z_{n+1}(X_{n+1}+1))}\\
&\geq & a_{X_{n+1}}\frac{\prod_{i= -y}^{X_n} a_i}{w(Z_n(X_n))w(Z_n(X_n+1))}\frac{w(Z_{n+1}(X_n))}{w(Z_{n+1}(X_{n+1}+1))}\\
&\geq & \frac{\prod_{i= -y}^{X_{n+1}} a_i}{w(Z_{n+1}(X_{n+1}))w(Z_{n+1}(X_{n+1}+1))}.
\end{eqnarray*}
\end{itemize}
\end{proof}

We can now recover, with our assumptions on $w$, Volkov's result \cite{V2} stating that the walk does not get stuck on any finite interval.
\begin{prop} For any $y>0$, we have
$$
\limsup_n X_n = +\infty \quad\hbox{on the event $\{\tau_{-y} = \infty\}$.}
$$
\end{prop}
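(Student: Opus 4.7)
The plan is to argue by contradiction: fix an integer $R\ge 0$ and suppose the event $A := \{\tau_{-y}=\infty,\ \limsup_n X_n \le R\}$ has positive probability. On $A$ the walk is eventually confined to $[-y+1,R]$. By the previous proposition, $M_n$ converges almost surely on $\{\tau_{-y}=\infty\}$ to a finite limit $M_\infty$, and $\inf_i X_i \ge -y+1$ is also finite.

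The first step is to turn this convergence into summability on the $\Delta_i$'s. Passing to the limit $n\to\infty$ in \eqref{eq1}, the non-negativity of every summand together with monotone convergence gives
$$\sum_{i:\,X_{i+1}=X_i+1}(1-a_{X_{i+1}})\Delta_i \;\le\; M_\infty - \tfrac{1}{2}\inf_i X_i \;<\;\infty.$$
On $A$ we have $X_{i+1}\in[-y+1,R]$, so $1-a_{X_{i+1}} \ge c_0 := \min\bigl(1/2,\,(R+2)^{-1-\varepsilon}\bigr)>0$, and therefore $\sum_{i:\,X_{i+1}=X_i+1}\Delta_i$ is finite.

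The second step is to produce a pointwise lower bound on $\Delta_n$. Combining Proposition \ref{minodelta} (applied at $z=X_n-1$) with the definition of $\Delta_n$, and using $a_{X_n}\ge 1/2$, one obtains
$$\Delta_n \;\ge\; \frac{A_y/2}{w(Z_n(X_n))\,w(Z_n(X_n+1))}.$$
The assumption $w(k)=o(k^\alpha)$ supplies a constant $C$ with $w(k)\le C(1+k)^\alpha$ for every $k\ge 0$. Combined with the trivial bound $Z_n(\cdot)\le n+1$, this gives a deterministic estimate $\Delta_n \ge c(n+2)^{-2\alpha}$, valid on $\{\tau_{-y}=\infty\}$.

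The last step sums this estimate along right jumps. On $A$ the inclusion $X_N\in[-y+1,R]$ for $N$ large forces $|R_N-L_N|\le R+y-1$, where $R_N,L_N$ count right and left jumps up to time $N$; hence $R_N \ge N/3$ for $N$ large. Thus
$$\sum_{i\le N:\,X_{i+1}=X_i+1}\Delta_i \;\ge\; \frac{c\,R_N}{(N+2)^{2\alpha}} \;\ge\; c'\,N^{1-2\alpha} \;\longrightarrow\; +\infty,$$
since $2\alpha<1$, contradicting the finiteness above. So $\P(A)=0$, and a union bound over $R\in\N$ concludes. The main obstacle is precisely obtaining a good pointwise lower bound on $\Delta_n$: the only tools available are Proposition \ref{minodelta} and the crude estimate $Z_n(\cdot)\le n+1$. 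What makes the argument close is the hypothesis $\alpha<1/2$, which forces $2\alpha<1$ and the resulting sum to diverge along right jumps; any weaker exponent (say $\alpha<1$) would fail at this step.
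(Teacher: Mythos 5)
Your proof is correct, and it takes a genuinely different route from the paper's at the key step. Both arguments start from the same place: identity \eqref{eq1}, the a.s.\ convergence of $M_{n\wedge\tau_{-y}}$, and the lower bound on $\Delta$ from Proposition \ref{minodelta}, which together show that $\sum_i \mathbf{1}_{\{X_{i+1}=X_i+1\}}(1-a_{X_{i+1}})\Delta_i$ stays bounded on $\{\tau_{-y}=\infty\}$. The paper then keeps only the contribution of the single most visited edge $e_n=(s_n,s_n+1)$: since $\max(Z_n(s_n),Z_n(s_n+1))\le 2Z_n(e_n)$ and $w(k)=o(\sqrt{k})$, boundedness of $M$ forces either $Z_n(e_n)$ to stay bounded (so infinitely many edges are crossed, and being bounded below the walk must drift right) or $1-a_{s_n+1}\to 0$ along a subsequence (so $s_n\to+\infty$); no explicit confinement event is introduced. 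You instead fix a confinement level $R$, keep \emph{all} the terms, and replace the local times by the crude deterministic bound $Z_i(\cdot)\le i+1$ to get the pointwise estimate $\Delta_i\ge c(i+2)^{-2\alpha}$; confinement then forces a positive fraction of right jumps, and $R_N\cdot(N+2)^{-2\alpha}\gtrsim N^{1-2\alpha}\to\infty$ contradicts the summability. Your version is more elementary and makes fully explicit the paper's somewhat terse final sentence ("in any case, this means $X$ goes arbitrarily far to the right"), at the cost of the very lossy bound $Z_i(\cdot)\le i+1$ and a union bound over $R$; the paper's edge-based dichotomy avoids that crudeness (and, as written, needs only $w(k)=o(\sqrt{k})$ rather than a power bound, though your argument could be rephrased with $\Delta_i\ge c/w(i+1)^2$ and $N/w(N+1)^2\to\infty$ to match this generality). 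Two cosmetic points: on your event $A$ the inclusion $X_{i+1}\in[-y+1,R]$ holds only for $i$ large, so the bound $1-a_{X_{i+1}}\ge c_0$ applies only to the tail of the sum --- harmless, since the finitely many discarded terms are finite; and Proposition \ref{minodelta} applied directly at $z=X_n$ gives your pointwise bound without the extra factor $a_{X_n}\ge 1/2$.
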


\begin{proof}
On $\{\tau_{-y}=\infty\}$, the combination of \eqref{eq1} and Proposition \ref{minodelta} give
\begin{equation}\label{eq4}
M_n\geq  \sum_{i=0}^{n-1} \mathbf{1}_{\{X_{i+1} = X_i + 1\}}\left(1 - a_{X_{i +1}}\mathbf{1}_{\{\exists j >i,\, X_j = X_i\}}\right)\frac{A_y}{w(Z_i(X_i))w(Z_i(X_i+1))}  - \frac{y-1}{2}.
\end{equation}
Denoting by $e_n=(s_n,s_{n}+1)$ the edge which has been most visited at time $n$, we deduce that on the event $\{\tau_{-y}=\infty\}$,
$$M_{n}\geq  Z_n(e_n)\left(1 - a_{s_n+1}\right)\frac{A_y}{w(Z_n(s_n))w(Z_n(s_n+1))}   - \frac{y-1}{2},
$$
where $Z_n(e_n)$ denotes the number of times the edge $e_n$ has been crossed from left to right before time $n$.
Using that $\max(Z_n(s_n),Z_n(s_n+1))\leq 2Z_n(e_n)$ and that $w(k)=o(\sqrt{k})$ and that $M_{n\wedge \tau_{-y}}$ converges, we conclude that
on $\{\tau_{-y}=\infty\}$, either $Z_n(e_n)$ remains bounded or $a_{s_n+1}$ takes values arbitrarily close to $1$. In any case, this means that $X$ goes arbitrarily far to the right hence $\limsup_n X_n = +\infty$.
\end{proof}

\section{Proof of theorem \ref{maintheo}}

Fix $y>0$ and consider the event $\mathcal{E}_y = \{ \inf_n{X_n} = -y+1\}$. Pick $v>0$ and define $N_z$ to be the number of jumps of $X$ from site $z$ to site $z+1$ before time $\tau_v$ (according to the previous proposition $\tau_v$ is finite on $\mathcal{E}_y$ so all the $N_z$ are finite). From \eqref{eq4}, grouping together the contributions to $M$ of each edge $(z,z+1)$, we get, on $\mathcal{E}_y$,
\begin{eqnarray*}\label{eq5}
M_{\tau_v}&\geq& A_y \sum_{z=-y+1}^{v-1} \frac{1+(N_z-1)(1-a_v)}{w(Z_{\tau_v}(z))w(Z_{\tau_v}(z+1))}   - \frac{y-1}{2}\\
&\geq & A_y \sum_{z=-y+1}^{v-1} \frac{1+(N_z-1)(1-a_v)}{w(N_{z-1}+N_z)w(N_{z}+N_{z+1})}   - \frac{y-1}{2}\\
&\geq & A_y \sum_{z=-y+1}^{v-1} \frac{\frac{1}{2}+ N_z(1-a_v)}{w(N_{z-1}+N_z)w(N_{z}+N_{z+1})}   - \frac{y-1}{2}\\
&\geq & C A_y\sum_{z=-y+1}^{v-1} \frac{\frac{1}{2}+\frac{N_z}{(v+2)^{1+\varepsilon}}}{(N_{z-1}+N_z)^\alpha (N_{z}+N_{z+1})^\alpha}   - \frac{y-1}{2}
\end{eqnarray*}
where $C>0$ and $\alpha < 1/2$ only depend on the weight function $w$. Finally, lemma \ref{analem} below states that if we choose $\varepsilon > 0$ small enough, the sum above becomes arbitrarily large as $v$ tends to infinity. On the other hand, we also know that $M$ converges on this event so necessarily $\P\{\mathcal{E}_y\} = 0$. Since this result holds for any $y>0$, we get  $\inf X_n = -\infty$ a.s. By symmetry, $\sup X_n = +\infty$ a.s. which implies that the walk visits every site of the integer lattice infinitely often almost surely.

\section{An analytic lemma}

\begin{lem}\label{analem} For any $0 < \alpha < \frac{1}{2}$, there exists $\varepsilon >0$ such that
\begin{equation}\label{eq6}
\limsup_{K\to\infty}  \underset{(b_0,\ldots,b_{K})\in [1,\infty)^{K+1}}{\inf} \sum_{i=0}^{K}\frac{\frac{1}{2} + \frac{b_i}{(K+2)^{1+\varepsilon}}}{(b_{i-1} + b_i)^\alpha(b_i + b_{i+1})^\alpha} = \infty
\end{equation}
(with the convention $b_{-1} = b_{K+1} = 0$).
\end{lem}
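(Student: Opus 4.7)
The strategy is to perform a dichotomy on how many of the $b_i$ exceed a threshold $T \defeq (K+2)^{1+\varepsilon}$, choosing $\varepsilon > 0$ small enough that $2\alpha(1+\varepsilon) < 1$---possible precisely because $\alpha < 1/2$. In both branches the sum will be bounded below, uniformly in $(b_0,\ldots,b_K)$, by a constant times $K^{1-2\alpha(1+\varepsilon)}$, which diverges.

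Setting $M_i \defeq \max(b_{i-1}, b_i, b_{i+1})$, the bounds $b_{i-1}+b_i \le 2M_i$ and $b_i+b_{i+1} \le 2M_i$ give the uniform pointwise estimate
\[
\frac{\tfrac{1}{2}+b_i/T}{(b_{i-1}+b_i)^\alpha(b_i+b_{i+1})^\alpha} \;\ge\; \frac{\tfrac{1}{2}+b_i/T}{4^\alpha\, M_i^{2\alpha}}.
\]
Let $h=\#\{i:b_i>T\}$. If $h\le K/6$, then the set $L\defeq\{i:M_i\le T\}$ contains at least $K+1-3h\ge K/2$ indices, since each $j$ with $b_j>T$ excludes at most three indices from $L$. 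Retaining only the $\tfrac{1}{2}$-part of the numerator and only the indices in $L$ gives a lower bound of $\frac{K/2}{2\cdot 4^\alpha T^{2\alpha}}$, which is of order $K^{1-2\alpha(1+\varepsilon)}$.

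In the opposite case $h>K/6$, I use the $b_i/T$-part of the numerator together with the bound $\sum_i b_i^{1-2\alpha}\ge h\,T^{1-2\alpha}\ge (K/6)\,T^{1-2\alpha}$ (valid since $b_i>T$ on $h$ indices) and the key inequality
\[
(\star) \qquad \sum_i \frac{b_i}{M_i^{2\alpha}} \;\ge\; c_\alpha\,\sum_i b_i^{1-2\alpha},\qquad c_\alpha\defeq 3^{-2\alpha/(1-2\alpha)}>0.
\]
Combining, the sum is at least $\frac{1}{4^\alpha T}\sum_i \frac{b_i}{M_i^{2\alpha}}\ge \frac{c_\alpha K}{6\cdot 4^\alpha}\,T^{-2\alpha}$, again of order $K^{1-2\alpha(1+\varepsilon)}$.

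The main obstacle is $(\star)$, which I prove by H\"older's inequality. Set $q\defeq 1/(1-2\alpha)>1$, $a_i\defeq b_i^{1-2\alpha}$, and $M'_i\defeq \max(a_{i-1},a_i,a_{i+1})$; since $x\mapsto x^{1-2\alpha}$ is monotone, $M'_i=M_i^{1-2\alpha}$, and a direct computation yields $a_i^q/(M'_i)^{q-1}=b_i/M_i^{2\alpha}$. Writing $a_i=\bigl(a_i^q/(M'_i)^{q-1}\bigr)^{1/q}\cdot (M'_i)^{(q-1)/q}$ and applying H\"older with exponents $q$ and $q/(q-1)$,
\[
\sum_i a_i \;\le\; \Bigl(\sum_i a_i^q/(M'_i)^{q-1}\Bigr)^{1/q}\Bigl(\sum_i M'_i\Bigr)^{(q-1)/q}.
\]
Since $M'_i\le a_{i-1}+a_i+a_{i+1}$, we have $\sum_i M'_i\le 3\sum_i a_i$; raising the displayed inequality to the $q$-th power and rearranging gives $(\star)$ with constant $3^{1-q}=c_\alpha$.
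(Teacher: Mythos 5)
Your proof is correct, and it takes a genuinely different route from the paper's. The paper sorts the $b_i$ in decreasing order, splits them into $l+1$ packets of geometrically growing sizes, and argues by contradiction: assuming the sum is bounded by $A$, each packet yields a recursive inequality $\tilde b_{K_k}\le C K^\varepsilon \tilde b_{K_{k-1}}^{2\alpha}$, which, iterated and combined with the bound from the last packet, forces $\tilde b_0$ to be so large that the single term it indexes already exceeds $A$. Your argument is instead a direct two-case estimate: either at least $5K/6$ of the $b_i$ lie below the threshold $T=(K+2)^{1+\varepsilon}$, in which case the constant part $\tfrac12$ of the numerators alone contributes order $K\,T^{-2\alpha}$ (your counting $\#L\ge K+1-3h$ is right, since each large $b_j$ can spoil only $M_{j-1},M_j,M_{j+1}$), or at least $K/6$ of them exceed $T$, in which case the $b_i/T$ part wins via your H\"older inequality $(\star)$. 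I verified $(\star)$: the exponent arithmetic $a_i^q/(M_i')^{q-1}=b_i/M_i^{2\alpha}$ with $q=1/(1-2\alpha)$ checks out, $\sum_i M_i'\le 3\sum_i a_i$ uses $a_{-1}=a_{K+1}=0$, and dividing by $\bigl(\sum_i a_i\bigr)^{q-1}>0$ is legitimate since $a_i\ge 1$. Your approach buys several things: it avoids the contradiction/iteration scheme entirely, it shows that the infimum itself tends to infinity at the explicit rate $K^{1-2\alpha(1+\varepsilon)}$ (stronger than the stated $\limsup$), and it makes the admissible range of $\varepsilon$ transparent, namely any $\varepsilon$ with $2\alpha(1+\varepsilon)<1$. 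What the paper's packet argument buys in exchange is only that it is entirely elementary (no H\"older), at the cost of an $l$-fold bootstrap and a less explicit choice of parameters.
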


\begin{proof} The idea is to group the $b_i$'s into packets with respect to their value. Consider a reordering of the $b_i$'s:
$$
\tilde{b}_0 \geq \tilde{b}_2 \geq \ldots \geq \tilde{b}_K.
$$
Fix a positive integer $l$ and group these numbers into $l+1$ packets
$$
\underset{\hbox{\tiny{packet $\mathcal{P}_1$}}}{\underbrace{\tilde{b}_0, \ldots, \tilde{b}_{K_1}}},
\underset{\hbox{\tiny{packet $\mathcal{P}_2$}}}{\underbrace{\tilde{b}_{K_1 + 1}, \ldots, \tilde{b}_{K_2}}}, \ldots,
\underset{\hbox{\tiny{packet $\mathcal{P}_l$}}}{\underbrace{\tilde{b}_{K_{l-1}}, \ldots, \tilde{b}_{K_l}}},
\underset{\hbox{\tiny{packet $\mathcal{P}_{l+1}$}}}{\underbrace{\tilde{b}_{K_l + 1}, \ldots, \tilde{b}_{K}}}.
$$
We can choose the $K_i$'s growing geometrically such that the sizes of the packets satisfy
\begin{equation}\label{eq7}
\#\mathcal{P}_1 \geq \frac{K}{4^l} \quad \hbox{and} \quad \#\mathcal{P}_i \geq 3(\# \mathcal{P}_1 + \ldots +  \#\mathcal{P}_{i-1}).
\end{equation}
We now regroup each term of the sum  \eqref{eq6} according to which packet the central $b_i$ (the one appearing in the numerator) belongs. Assume by contradiction that the sum \eqref{eq6} is bounded, say by $A$.

We first consider only the terms corresponding to packet $\mathcal{P}_1$. Since there are at least $\frac{K}{4^l}$ terms, we obtain the inequality
$$
A \geq \frac{K}{4^l}\frac{\frac{1}{2} + \frac{\tilde{b}_{K_1}}{(K+2)^{1+\varepsilon}}}{(2\tilde{b}_0)^\alpha(2\tilde{b}_0)^\alpha}
\geq C \frac{\tilde{b}_{K_1}}{K^{\varepsilon}\tilde{b}_0^{2\alpha}}
$$
where the constant $C$ does not depend on $K$ or on the sequence $(b_i)$. We now deal with packets $k = 2,\ldots,l$. Thanks to \eqref{eq7} and since every denominator in \eqref{eq6} involves only two $b_j$ other than the one appearing in the numerator, there are least one third of the terms belonging to packet $\mathcal{P}_k$ that do not contain any $b_j$ from a packet with smaller index (\emph{i.e.} with larger value). So, there is at least $\frac{K}{4^l}$ such terms for which we  can get a lower bound the same way we did for packet $\mathcal{P}_1$. We deduce that, summing over the terms corresponding to packet $\mathcal{P}_k$,
\begin{equation}\label{eq8}
\tilde{b}_{K_{k}} \leq C K^\varepsilon \tilde{b}^{2\alpha}_{K_{k-1}} \quad \hbox{for $i=1,\dots,l$,}
\end{equation}
with the convention $K_0 = 0$ and where $C$ again does not depend on $K$ or $(b_i)$. Finally, we obtain a lower bound for the sum of the terms belonging to the last packet $\mathcal{P}_{l+1}$ by taking $\frac{1}{2}$ as the lower bound for the numerator, and considering only the terms for which no $b_i$'s from any other packet appear in the denominator (again, there are at least $\frac{K}{4^l}$ such terms). This give the inequality
\begin{equation}\label{eq9}
K \leq C \tilde{b}_{K_l}^{2\alpha}.
\end{equation}
Combining \eqref{eq8} and \eqref{eq9}, we get by induction that for some constant $C$ (depending on $l$),
$$
K \leq C K^{\varepsilon(2\alpha + (2\alpha)^2 + \ldots + (2\alpha)^l)} \tilde{b}_0^{(2\alpha)^{l+1}} \leq C K ^{\frac{\varepsilon}{1-2\alpha}}\tilde{b}_0^{(2\alpha)^{l+1}}.
$$
For $\varepsilon$ small enough such that $\frac{\varepsilon}{1-2\alpha} \leq \frac{1}{2}$ we obtain
$$
\tilde{b}_0 \geq \frac{1}{C} K^{\frac{1}{2(2\alpha)^{l+1}}}.
$$
Recalling that the sum \eqref{eq6} contains the term corresponding to $\tilde{b}_0$ but is also, by assumption, bounded above by $A$, we find
$$
A \geq \frac{\frac{1}{2} + \frac{\tilde{b}_{0}}{(K+2)^{1+\varepsilon}}}{(2\tilde{b}_0)^\alpha(2\tilde{b}_0)^\alpha} \geq  C K^{\frac{1-2\alpha}{2(2\alpha)^{l+1}} - 1 - \varepsilon}.
$$
Finally, we choose $l$ large enough such that $\frac{1-2\alpha}{2(2\alpha)^{l+1}} - 1 - \varepsilon > 0$ and we get a contradiction by letting $K$ tends to infinity.
\end{proof}

\end{document}